\theoremstyle{plain}
\newtheorem{theorem}{Theorem}
\newtheorem{lemma}[theorem]{Lemma}
\theoremstyle{definition}
\newtheorem{conjecture}[theorem]{Conjecture}
\newtheorem{question}[theorem]{Question}
\theoremstyle{remark}
\title{\bf Antimagic orientation of  graphs with  minimum degree at least 33}
\author{Songling Shan\\
	\small
	 Illinos State University,  Normal,IL 61790\\
	\small\tt sshan12@ilstu.edu\\}
\date{May 4, 2020}
\begin{document}

\maketitle

% E-JC papers must include an abstract. The abstract should consist of a
% succinct statement of background followed by a listing of the
% principal new results that are to be found in the paper. The abstract
% should be informative, clear, and as complete as possible. Phrases
% like "we investigate..." or "we study..." should be kept to a minimum
% in favor of "we prove that..."  or "we show that...".  Do not
% include equation numbers, unexpanded citations (such as "[23]"), or
% any other references to things in the paper that are not defined in
% the abstract. The abstract will be distributed without the rest of the
% paper so it must be entirely self-contained.
\begin{abstract}
	An antimagic labeling
	of a directed graph $D$ with $n$ vertices and $m$ arcs is a bijection from the set of arcs of
	$D$ to the integers $\{1, \cdots,  m\}$ such that all $n$ oriented vertex sums are pairwise distinct,
	where an oriented vertex sum is the sum of labels of all arcs entering that vertex minus the
	sum of labels of all arcs leaving it. A  graph $G$ has an  antimagic orientation if  it has  an orientation which  admits an antimagic
	labeling. Hefetz, M{\"{u}}tze, and Schwartz  conjectured that
	every connected  graph admits an antimagic orientation.
	In this paper, we show that every bipartite graph  without both isolated and degree 2 vertices admits an antimagic orientation and every  graph $G$ with $\delta(G)\ge 33$ 
	admits an  antimagic orientation. Our proof relies on a newly developed structural property of bipartite graphs, which might be of independent interest.  
	
	\bigskip\noindent \textbf{Keywords:} Labeling; Antimagic labeling; Antimagic orientation; Matching 
\end{abstract}

\section{Introduction}
All graphs considered are simple and finite unless otherwise stated. 
For two integers  $p,q$,  $[p,q]:=\{p,p+1\ldots, q\}$ if $q\ge p$,
and  $[p,q]:=\emptyset$ if $q<p$. 
A {\it labeling\/} of a graph $G$  with $m$ edges is a bijection from $E(G)$ to a set $S$ of $m$ integers,
and the {\it vertex sum\/} at a vertex $v\in V(G)$ is the sum of labels on the edges incident to $v$.
A  labeling  is {\it antimagic\/} if  $S=[1,m]$ and all the vertex sums are distinct. 
A graph is {\it antimagic\/} if it has an antimagic labeling.

Hartsfield and Ringel~\cite{MR1282717} introduced antimagic labelings in 1990 and conjectured
that every connected graph other than $K_2$ is antimagic.
There have been some significant progress towards this conjecture. Let $G$ be a graph with $n$ vertices
other than $K_2$.
In 2004, Alon, Kaplan, Lev, Roditty, and Yuster~\cite{MR2096791} showed that
there exists a constant $c$ such that if $G$ has minimum degree
at least $ c \cdot \log n$,
then $G$ is antimagic.  They also proved that $G$ is antimagic when the
maximum degree of $G$ is at least $n-2$, and they
proved that all complete multipartite graphs (other than $K_2$) are antimagic.
The latter result of Alon et al. was improved by
Yilma~\cite{MR3021347} in 2013.

Apart from the  results above on dense graphs,
the antimagic labeling conjecture has been also verified for
regular graphs.
Started with Cranston~\cite{MR2478227} showing that
every bipartite regular graph is antimagic,
regular graphs of odd degree~\cite{MR3372337},
and finally all regular graphs~\cite{MR3414180,MR3515572} were  shown
to be antimatic sequentially.
For more results on the antimagic labeling conjecture
for other classes of graphs, see~\cite{MR3527991, MR2174213, MR2682515,MR2510327}.

Hefetz, M{\"{u}}tze, and Schwartz~\cite{MR2674494} introduced the
variation of antimagic labelings, i.e., antimagic labelings on
directed graphs. An {\it antimagic\/} labeling of a directed graph with  $m$ arcs is
a bijection from the set of arcs  to the integers $\{1,...,m\}$ such that
any two oriented vertex sums are pairwise distinct, where an {\it oriented
	vertex sum\/} is the sum of labels of all arcs entering that vertex minus
the sum of labels of all arcs leaving it.
A digraph is called {\it antimagic\/} if it admits an antimagic labeling. For an undirected graph $G$, if it has an orientation such that
the orientation is antimagic, then we say $G$ admits an {\it antimagic orientation\/}.
Hefetz, M{\"{u}}tze, and Schwartz in the same paper posted the following problems.

\begin{question}[\cite{MR2674494}]\label{question1}
	Is every connected directed graph with at least 4 vertices antimagic?
\end{question}

\begin{conjecture}[\cite{MR2674494}]\label{antimagic-orientation}
	Every connected  graph admits an antimagic orientation.
\end{conjecture}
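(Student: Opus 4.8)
The plan is to split the argument according to the minimum degree $\delta(G)$ and to treat the dense and the sparse regimes by entirely different means, isolating the genuinely problematic low-degree vertices for a separate and delicate analysis. First, for graphs of large minimum degree I would push the mechanism behind the $\delta(G)\ge 33$ result as far down as possible. The engine there is a matching/factor decomposition combined with the new structural property of bipartite graphs advertised in the abstract: one extracts a spanning subgraph whose components are easy to orient and label, reserves a block of edges to break ties, and labels the two parts on disjoint subintervals of $[1,m]$ so that their oriented vertex sums occupy disjoint ranges. The goal is to replace the constant $33$ by a small absolute bound, ideally $\delta(G)\ge 3$, which would settle every connected graph of minimum degree at least $3$ in one stroke.

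Second, for sparse graphs I would work from a spanning tree. Fix a spanning tree $T$ of $G$, root it, and orient the tree edges consistently toward the root; a depth-first ordering of $V(G)$ then lets one label the tree edges so that, read from the leaves upward, each internal vertex sum is forced into a prescribed and strictly increasing window. The non-tree edges are oriented and labeled last and serve as correction terms to repair collisions created at internal vertices. In the bipartite case this is exactly where the quoted structural property does the heavy lifting, and the paper's bipartite theorem, for graphs without isolated vertices or vertices of degree $2$, is the template one would aim to generalize to all connected graphs.

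Third, and this is where I expect the real difficulty, one must handle vertices of degree $1$ and especially of degree $2$. A degree-$2$ vertex $v$ has oriented vertex sum $\pm a \pm b$, where $a,b$ label its two incident arcs, so only four values are available and they are tightly coupled to the sums of $v$'s neighbors; long induced paths of degree-$2$ vertices propagate these constraints and make local corrections collide. The strategy is to contract each maximal path of degree-$2$ vertices to a single edge, solve the antimagic-orientation problem on the resulting higher-degree minor, and then re-expand the paths while drawing their labels from a separate block of integers chosen so that every reinserted sum avoids all previously used sums by a fixed margin. The delicate point is guaranteeing that the re-expansion never forces two degree-$2$ vertices, on the same path or on different paths, to share a sum; this is the crux of the whole program and the principal obstruction to a complete proof.

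Finally, connectivity would be handled by reducing to the $2$-edge-connected case via an ear decomposition, or by processing the block-cut tree of $G$ and merging the per-block labelings on disjoint integer ranges, with the only care needed at cut vertices, whose sums aggregate contributions from several blocks and can be absorbed by keeping one flexible high-label arc per block. I expect the decisive obstacle to be the uniform control of degree-$2$ vertices across both regimes: reconciling the matching-based dense argument with the tree-based sparse argument on a single graph that simultaneously contains dense clusters and long degree-$2$ paths is precisely the gap that keeps the full conjecture open, which is why the present paper establishes only the two tractable extremes.
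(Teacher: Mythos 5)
The statement you are addressing is Conjecture~\ref{antimagic-orientation}, which is an \emph{open conjecture} of Hefetz, M\"utze, and Schwartz: the paper does not prove it and does not claim to. It only establishes two special cases, Theorem~\ref{th1} (bipartite graphs with no vertex of degree $0$ or $2$) and Theorem~\ref{th2} ($\delta(G)\ge 33$), as partial support. Your proposal, by your own admission, is likewise not a proof but a research program, and each of its load-bearing steps is precisely a known open difficulty rather than a resolved one. Concretely: (a) replacing $33$ by $3$ in Theorem~\ref{th2} is stated as a goal with no mechanism; the paper's argument genuinely needs large degree, since every vertex $v\in T$ must donate edges to $G_2$ (to force $d_{G_2}(v)\equiv 0 \pmod 4$), then surrender half of its remaining incident edges to $H_2$, and still retain at least two incident edges in $H_1$ so that Lemma~\ref{partitions} applies --- this is the content of inequality \eqref{ge2}, $d_{H_1}(v)\ge d_G(v)/4-7\ge 2$, and the chain collapses for small degrees. (b) Your contract-and-re-expand treatment of degree-$2$ paths does not work as sketched: an antimagic labeling must use \emph{exactly} the set $[1,m]$, so labels for reinserted paths cannot be drawn from an independent ``separate block'' without globally re-deriving every other sum; moreover, along a reinserted path the internal sums are differences of consecutive labels, and guaranteeing that all such differences are pairwise distinct and avoid all other sums is a graceful-labeling-type problem with no known general solution --- this is exactly why Theorem~\ref{th1} excludes degree-$2$ vertices (a degree-$2$ vertex of $T$ would leave $|A_i|=1$ after removing its $M^*$-edge, violating the hypothesis $r_i\ge 2$ of Lemma~\ref{partitions}).

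(c) The block-cut-tree merging step also fails as described: if you translate a block's labels by a constant $c$, the oriented sum at a vertex $v$ changes by $c\,(d^-(v)-d^+(v))$, a quantity that varies from vertex to vertex, so per-block antimagic labelings on disjoint integer ranges do not simply concatenate, and ``one flexible high-label arc per block'' is a heuristic, not an argument that collisions across blocks (or at cut vertices) are avoidable. In short, there is no gap-free proof here to compare with the paper, and there could not be one matching the paper, because the paper itself proves only the two tractable extremes; the obstructions you identify (degree-$2$ vertices, and reconciling the dense matching-based argument with a sparse tree-based one) are accurately diagnosed, but diagnosing them is where your proposal ends, not where a proof begins.
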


Hefetz, M{\"{u}}tze, and Schwartz~\cite{MR2674494} showed
that every orientation of a dense graph is antimagic
and almost all regular graphs
have an antimagic orientation. Particulary,
they showed that every orientation of stars\,(other than $K_{1,2}$), wheels,
and complete graphs\,(other than $K_3$)
is antimagic. 
Conjecture \ref{antimagic-orientation} has been also  verified  for regular graphs \cite{MR2674494,1707.03507, SH2019,Y2019}, biregular bipartite graphs with minimum degree at least two \cite{SY2017}, Halin graphs \cite{YCZ2019},  graphs with large maximum degree \cite{YCOP2019},
and graphs with large independence number~\cite{SYZ}.  
 In this paper, by supporting
Conjecture~\ref{antimagic-orientation}, we obtain the
results below.

\begin{theorem}\label{th1}
	Every bipartite graph  with no vertex of degree 0 or 2    admits an antimagic orientation. 
\end{theorem}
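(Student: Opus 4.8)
The plan is to reduce the directed problem to an undirected one by a single global orientation, and then to construct the required labeling by forcing the vertex sums to increase along carefully chosen orderings of the two colour classes.

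First I would fix a bipartition $(X,Y)$ of $G$ and orient every edge from $X$ to $Y$. Since $G$ has no isolated vertex, every oriented vertex sum is nonzero; moreover each vertex of $X$ has only outgoing arcs, hence a negative oriented sum, while each vertex of $Y$ has only incoming arcs, hence a positive oriented sum. Thus no vertex of $X$ can share its oriented sum with a vertex of $Y$, and the whole task collapses to a purely undirected statement: there is a bijection $f\colon E(G)\to[1,m]$ whose ordinary vertex sums $s(v)=\sum_{e\ni v}f(e)$ are pairwise distinct within $X$ and pairwise distinct within $Y$. This reduction uses nothing about connectivity, which matches the hypotheses of the theorem.

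Next I would aim for the stronger conclusion that the sums can be made strictly monotone: I want orderings $X=\{x_1,\dots,x_p\}$ and $Y=\{y_1,\dots,y_q\}$ and a bijection $f$ with $s(x_1)<\dots<s(x_p)$ and $s(y_1)<\dots<s(y_q)$. Monotonicity disposes of the degree-$1$ vertices automatically, since the sum at such a vertex is simply the label of its unique edge; they merely occupy positions in the orderings and need no special treatment. All remaining vertices have degree at least $3$, which is the slack the construction exploits. The engine is the promised structural property of bipartite graphs, which I would use to produce, \emph{simultaneously} for both colour classes, an ordering together with a distinguished ``control'' edge for (most) vertices, these control edges being distinct. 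Assigning the labels $m,m-1,\dots$ to the control edges in the order dictated by the desired vertex rankings, and filling in the rest arbitrarily, forces each new vertex sum to dominate its predecessors on its own side, yielding both chains of strict inequalities at once.

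The hard part will be the coupling between the two sides: each edge $xy$ contributes to a sum in $X$ and to a sum in $Y$, so the two orderings cannot be chosen independently, and the structural lemma is precisely what must guarantee control edges and orderings compatible with both. This is aggravated by the fact that an acyclic piece of $G$ (for instance a star) has fewer edges than vertices and so cannot supply a private control edge to each of its vertices; the structural property must therefore single out the vertices that receive private controls and handle the rest—the degree-$1$ vertices and the leaves of tree-like parts—by a separate monotone argument. The exclusion of degree-$2$ vertices enters at exactly this point: a degree-$2$ vertex has sum $f(e)+f(e')$, and such two-term sums collide far too easily (the familiar $a+d=b+c$ obstruction) while leaving no spare incident edge to serve as a control, so the decomposition, and with it the labeling scheme, would fail in their presence. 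Verifying that degree at least $3$ (or degree exactly $1$) provides enough room to run the greedy assignment, and that the structural lemma genuinely yields orderings compatible with both parts, is the crux of the proof.
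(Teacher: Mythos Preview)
Your reduction---orient every edge from $X$ to $Y$ so that the two colour classes are separated by sign---is correct and harmless, but it is also where the agreement with the paper ends. The coupling problem you flag is genuine, and your proposal does not resolve it: you hope the structural lemma will hand you distinct control edges and compatible orderings for \emph{both} sides simultaneously, but no such lemma is on offer, and the greedy scheme you sketch (assign $m,m-1,\dots$ to control edges) cannot be made to serve two masters at once, since the control edge for $x$ is simultaneously an uncontrolled edge for some $y$.

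The paper sidesteps the coupling entirely by treating the two sides \emph{asymmetrically}. The structural lemma (Lemma~\ref{matching-in-bipartite}) does not refine the bipartition $(X,Y)$; it produces a \emph{different} partition $V(G)=S\cup T$ in which $T$ is independent and there is a matching $M\subseteq E_G(S,T)$ saturating $S$. (With the bare bipartition such a matching need not exist, which is exactly why a new partition is needed.) All edges incident to $T$ are then directed into $S$, so $T$-sums are negative and $S$-sums can be made positive. The matching $M$ supplies a private control edge for each vertex of $S$ only, and the largest labels on $M$ force the $S$-sums to be strictly increasing---this is your monotone idea, but applied to one side. For $T$ there is no attempt at monotonicity at all: after reserving one edge of $M^*$ per $T$-vertex, the remaining edges at each $y\in T$ are labelled via Lemma~\ref{partitions} so that their sum is a multiple of $m_2$ (or $m_2+1$); the reserved $M^*$-labels then act as distinct residues, forcing the $T$-sums to be pairwise distinct modulo that modulus. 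The hypothesis ``no vertex of degree $2$'' is used precisely here: once one edge at $y$ is set aside for $M^*$, Lemma~\ref{partitions} requires at least two remaining edges, so $d_G(y)\ge 3$ (degree-$1$ vertices of $T$ are handled separately since they have no edges in $H$). Your diagnosis of where degree~$2$ hurts---``no spare incident edge to serve as a control''---is in the right spirit but attached to the wrong mechanism.
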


\begin{theorem}\label{th2}
	Every    graph $G$ with $\delta(G)\ge 33$ admits an antimagic orientation. 
\end{theorem}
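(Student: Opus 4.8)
The plan is to prove Theorem~\ref{th2} by reducing the case of a general graph with $\delta(G) \ge 33$ to the bipartite case already handled in Theorem~\ref{th1}. The key observation is that Theorem~\ref{th1} gives an antimagic orientation for any bipartite graph whose minimum degree is at least $3$ (such a graph has no vertex of degree $0$ or $2$), so if I can extract from $G$ a spanning bipartite subgraph $H$ with $\delta(H) \ge 3$, I can hope to bootstrap an antimagic orientation of $G$ from one of $H$.

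First I would invoke a standard probabilistic or greedy partition argument to split $V(G)$ into two parts $(X,Y)$ so that the bipartite subgraph $H$ induced by the edges crossing the cut has large minimum degree. A simple local-switching argument (move a vertex to the other side whenever doing so increases its number of crossing edges) produces a cut in which every vertex has at least $\deg_G(v)/2$ of its edges crossing; with $\delta(G) \ge 33$ this guarantees $\delta(H) \ge \lceil 33/2 \rceil = 17 \ge 3$, comfortably meeting the hypothesis of Theorem~\ref{th1}. Thus $H$ admits an antimagic orientation with an antimagic labeling using $[1, e(H)]$.

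Next I would account for the remaining edges, $E(G) \setminus E(H)$, which lie inside the two parts $X$ and $Y$. The plan is to orient and label these leftover edges with the larger labels $[e(H)+1, e(G)]$ and to control their contribution to each vertex sum. Here the chosen threshold $\delta(G)\ge 33$ presumably appears: one wants enough crossing edges (hence enough ``slack'' in the bipartite labeling) and enough leftover edges to be orientable so that, after adding the oriented contributions of the inside edges, the total vertex sums remain pairwise distinct. The natural mechanism is to first fix the antimagic labeling of $H$ so its vertex sums are already distinct and well-separated, then orient each leftover edge and assign it a large label in a way that perturbs the vertex sums by controlled amounts, using the fact that distinct vertices have distinct and sufficiently spread-out $H$-sums to absorb the perturbations without creating collisions.

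The hard part, I expect, will be the last step: guaranteeing global distinctness of the vertex sums after the leftover inside-edges are incorporated. Unlike the bipartite case, a vertex now receives contributions from both its crossing edges and its inside edges, and the inside edges can be oriented in either direction, so the combined sums are not automatically monotone or separated. Making this work likely requires a careful ordering of the vertices, an inductive or greedy assignment of the large labels to inside edges (orienting each to increase or decrease a vertex sum as needed), and an inequality showing that the gaps between the $H$-sums exceed the maximal possible cumulative distortion from the inside edges --- which is exactly where a concrete bound like $33$ on $\delta(G)$ must be large enough to close the estimates. I would isolate this as the main lemma and devote most of the effort to verifying that the counting inequalities hold.
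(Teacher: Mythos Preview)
Your proposal has a genuine gap, and the paper's proof does \emph{not} proceed by invoking Theorem~\ref{th1} as a black box.

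The flaw is in the third step. Theorem~\ref{th1} hands you an antimagic orientation of the bipartite subgraph $H$, meaning the $H$-sums are pairwise \emph{distinct}, but it gives you no control whatsoever on how close together they are: two vertices could have $H$-sums differing by $1$. Your plan then asks the leftover inside edges, carrying the largest labels $[e(H)+1,\,e(G)]$, to perturb these sums without creating collisions. But each inside edge $uv$ contributes (with opposite signs) to \emph{both} $u$ and $v$, so you cannot orient it to adjust the two endpoints independently; and a vertex may be incident to up to $\lfloor d_G(v)/2\rfloor$ inside edges, each carrying a label of magnitude roughly $e(G)$. There is no ``inequality showing that the gaps between the $H$-sums exceed the maximal possible cumulative distortion,'' because no such gap is guaranteed by Theorem~\ref{th1}. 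The sketch never supplies a mechanism to restore distinctness, and the hoped-for lemma does not exist in this form.

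What the paper actually does is rebuild the entire labeling from scratch, using the structural Lemma~\ref{matching-in-bipartite} to obtain a partition $S\cup T$ with $T$ independent in the max-cut bipartite subgraph and a matching $M$ saturating $S$. The non-bipartite edges are not treated as a perturbation at all. Instead the edge set is split into five pieces $H_1,\,G_2,\,H_2,\,M^*\setminus M,\,M$, engineered so that for every $y\in T$ one has $d_{H_2}(y)=d_{G_2}(y)/2$. Lemmas~\ref{consecutive} and~\ref{Teven} are then applied so that the oriented contributions of $G_2$ and $H_2$ at each $y\in T$ are exactly $d_{G_2}(y)/2$ and $-d_{H_2}(y)$, hence cancel to zero. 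The sum at each $y\in T$ is therefore determined entirely by $H_1$ and the single $M^*$-edge, and Lemma~\ref{partitions} forces the $H_1$-contribution to be a multiple of $m_1$ (or $m_1+1$), so distinctness on $T$ follows modularly; distinctness on $S$ is handled last via the matching $M$. The threshold $33$ enters precisely to guarantee $d_{H_1}(y)\ge 2$ after all these subtractions (roughly $d_G(y)/4-7\ge 2$), which is what Lemma~\ref{partitions} requires. This exact-cancellation idea is the missing ingredient in your outline.
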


The remainder of this paper is organized as follows. 
We introduce several preliminary results in Section 2. In Section 3, we prove Theorem~\ref{th1},
and  in Section 4, we prove Theorem~\ref{th2}.

\section{Notation and Preliminary Lemmas}

Let $G$ be a graph.  We use  $e(G)$ for $|E(G)|$. 
For $S\subseteq V(G)$, $G[S]$ is the subgraph of $G$ induced by $S$. 
For two disjoint subsets $S,T\subseteq V(G)$, 
we denote by $E_G(S,T)$ the set of edges in $G$ with one endvertex in $S$
and the other in $T$, and let $e_G(S,T)=|E_G(S,T)|$. 
If $G$ is bipartite with partite sets $X$ and $Y$,
we denote $G$ by $G[X,Y]$ to emphasis the bipartitions.
Given an orientation $D$ of $G$, a labeling $\sigma$ on $A(D)$ that is the set of arcs of $D$, 
and a vertex $v\in V(D)$, we use $s_{[D,\sigma]}(v)$ to denote 
the {\it oriented sum at $v$\/} in $D$, which  is the sum of labels of all arcs entering $v$ minus
the sum of labels of all arcs leaving it in the digraph $D$.

For a matching $M$ of $G$, we use $V(M)$ to denote the set of vertices saturated by $M$. For a vertex $x\in V(M)$, $M(x)$
is the vertex that is matched to $x$ in $M$. 
For each subset $X\subseteq V(M)$, if  $X$ is an independent set in $M$, then $M(X)$
is the set of of vertices that are matched to vertices from $X$
in $M$. By this definition, $|X|=|M(X)|$ and $X$ and $M(X)$ are disjoint. 
An {\it $M$-augmenting path } is a path whose edges are alternating between edges in $M$ and edges not in $M$ and with both endpoints being not saturated by $M$.

A \emph{trail} is an alternating sequence of vertices and edges $v_0e_1v_1\ldots e_tv_t$ such that
$v_{i-1}$ and $v_i$ are the endvertices of $e_i$, for each $i\in [1,t]$, and the edges are all distinct (but there might be repetitions among the vertices).  A trail is \emph{closed} if $v_0= v_t$, and is \emph{open} otherwise. An {\it Euler tour} of $G$ 
is a closed trail in $G$ that contains all the edges of $G$. 
We will need the following classic result of Euler in proving  a lemma later on. 
\begin{theorem}[Euler, 1736]\label{Euler}
	A mutigraph $G$ has an Euler tour if and only if 
	$G$ has at most one nontrivial component and every vertex of $G$
	has an even degree. 
\end{theorem}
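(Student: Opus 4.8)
The plan is to prove both directions of the equivalence, disposing of the easy necessity first and reserving the real work for sufficiency. For necessity, suppose $G$ admits an Euler tour $T=v_0e_1v_1\cdots e_tv_t$ with $v_0=v_t$. Since $T$ is a single walk traversing every edge of $G$, every vertex incident to an edge appears on $T$ and is joined within $T$ to every other such vertex; hence all edges lie in one component and $G$ has at most one nontrivial component. For the parity condition I fix a vertex $v$ and count its incident edges along $T$: each interior visit $v=v_i$ (with $0<i<t$) consumes two of them, $e_i$ and $e_{i+1}$, while the closing identification $v_0=v_t$ pairs $e_1$ with $e_t$ when $v=v_0$. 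Because $T$ uses every edge incident to $v$ exactly once, these edges split into pairs and the degree of $v$ is even.

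For sufficiency I would assume $G$ has at most one nontrivial component with all degrees even, and argue by induction on $e(G)$. If $G$ has no edges the empty closed trail is an Euler tour, so assume $e(G)\ge 1$ and let $H$ be the unique nontrivial component; an Euler tour of $H$ is automatically an Euler tour of $G$, since the remaining components are isolated vertices carrying no edges. Every vertex of $H$ has positive even degree, so I first produce a single closed trail: starting from any vertex $v_0$ of $H$ and extending along unused edges, whenever the current trail reaches a vertex $w\neq v_0$ it has used an odd number of edges at $w$, and evenness of the degree supplies an unused edge on which to continue. A maximal such trail can therefore halt only at $v_0$, yielding a closed trail $C_0$.

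The crux is to upgrade $C_0$ to a tour covering all edges, which I would handle by deletion and splicing. I delete the edges of $C_0$ to form $H'=H-E(C_0)$; because removing a closed trail changes every degree by an even amount, every vertex of $H'$ again has even degree, and each nontrivial component of $H'$ has strictly fewer edges than $H$. By the induction hypothesis each such component possesses an Euler tour. Connectivity of $H$ forces every nontrivial component $K$ of $H'$ to share a vertex with $C_0$: otherwise no edge of $H$ would join $V(K)$ to its complement, contradicting that $H$ is connected while $V(C_0)$ is nonempty and disjoint from $V(K)$. Inserting each component's Euler tour into $C_0$ at such a shared vertex merges all these closed trails into a single closed trail using every edge of $H$ exactly once, which is the desired Euler tour.

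I expect the main obstacle to be making the splicing step fully rigorous: one must check that each deleted piece is a genuine closed trail, that a shared vertex really allows concatenation without reusing any edge, and that the parity bookkeeping after removing $E(C_0)$ is correct (with loops in the multigraph counted as contributing two to a degree while being traversed once). Everything else—the parity argument for necessity and the greedy extension producing $C_0$—is routine once the evenness of the degrees is exploited locally at each vertex.
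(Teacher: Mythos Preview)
Your argument is the standard textbook proof of Euler's theorem and is correct. Note, however, that the paper does not supply its own proof of this statement: it is quoted as a classical result (attributed to Euler, 1736) and used as a black box in the proof of Lemma~\ref{Teven}, so there is nothing in the paper to compare your approach against.
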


\begin{lemma}[ \cite{MR2510327}]   \label{partitions}
	Let $t, n$ be integers with $t\ge 1$ and $n\ge 2$, and let 
	$n=r_1+\ldots+r_t$ be a partition of $n$, where  
	$r_i $  is an integer that is at least 2 for each $i\in [1,t]$. 
	Then the set $\{1,\ldots, n\}$ can be partitioned into 
	pairwise disjoint subsets $R_1, \ldots, R_t$
	such that for each $i\in [1,t]$, $|R_i|=r_i$
	and $\sum_{r\in R_i} r \equiv 0  \pmod{n+1}$ if $n$ is even, and 
	$\sum_{r\in R_i} r \equiv 0 \pmod{n}$ if $n$ is odd. 
\end{lemma}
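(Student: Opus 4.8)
The plan is to exploit the partition of $\{1,\ldots,n\}$ into blocks that already sum to a multiple of the required modulus, spending a little extra care on the parts of odd size. Write $N=n+1$ when $n$ is even and $N=n$ when $n$ is odd, so that in both cases the target becomes $\sum_{r\in R_i}r\equiv 0\pmod N$. First I would record the necessary divisibility: $1+\cdots+n=\tfrac{n(n+1)}{2}$ equals $\tfrac n2\cdot N$ when $n$ is even and $\tfrac{n+1}{2}\cdot N$ when $n$ is odd, so the total sum is a multiple of $N$, consistent with the conclusion. The basic building block is a pair $\{i,N-i\}$, which sums to $N\equiv 0$: when $n$ is even the pairs $\{1,n\},\{2,n-1\},\ldots,\{\tfrac n2,\tfrac n2+1\}$ partition $\{1,\ldots,n\}$, while when $n$ is odd the pairs $\{i,n-i\}$ for $1\le i\le \tfrac{n-1}{2}$ cover everything except the element $n$, which by itself is $\equiv 0\pmod N$.

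Next I would dispose of the parts of even size: a part $R_i$ with $r_i$ even is simply filled with $r_i/2$ whole pairs, contributing $\tfrac{r_i}{2}N\equiv 0$. The real work is with the parts of odd size (necessarily $r_i\ge 3$), since an odd number of residues cannot be made to vanish using whole pairs alone, and no single element of $\{1,\ldots,n\}$ is divisible by $N$. Counting parities, the number of odd parts has the same parity as $n$: even when $n$ is even, odd when $n$ is odd. So when $n$ is odd I would first assign the distinguished zero element $n$ to one odd part, padding it out to size $r_i$ with $(r_i-1)/2$ whole pairs; this leaves an even number of odd parts in every case. I then pair up the remaining odd parts arbitrarily. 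For a matched pair of odd parts of sizes $r_i,r_j$, I reserve three so-far-unused pairs $\{a_1,N-a_1\},\{a_2,N-a_2\},\{a_3,N-a_3\}$ and split their six elements into two triples, each summing to a multiple of $N$, one triple going to each part; the two parts are then completed with $(r_i-3)/2$ and $(r_j-3)/2$ whole pairs respectively. A triple here must take exactly one element from each reserved pair, so its sum is $\equiv \epsilon_1a_1+\epsilon_2a_2+\epsilon_3a_3\pmod N$ for signs $\epsilon_k\in\{+1,-1\}$, and the complementary triple realizes the opposite signs; hence the split succeeds precisely when the three chosen pairs satisfy $\epsilon_1a_1+\epsilon_2a_2+\epsilon_3a_3\equiv 0\pmod N$ for some choice of signs.

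The crux of the argument, and the step I expect to be the main obstacle, is therefore to produce enough pairwise-disjoint \emph{balanced} triples of pairs, i.e.\ triples of representatives satisfying $a_i=a_j+a_k$ (an additive triple) or $a_1+a_2+a_3=N$. If $s$ denotes the number of matched pairs of odd parts, then $s\le n/6$ (each odd part has size at least $3$), so the $3s$ representatives always fit among the available pairs; when there is slack ($s<n/6$) disjoint additive triples are easy to exhibit greedily, but the tight case, in which every part has size $3$ and the balanced triples must partition all of $\{1,\ldots,\lfloor n/2\rfloor\}$, is delicate. There an additive-triple-only partition exists only under a parity condition on $\lfloor n/2\rfloor$, and I would repair the failing residues by substituting one or two triples of the summing-to-$N$ type, whose odd sum shifts the parity; this is essentially a Skolem-type construction. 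Once the disjoint balanced triples are in hand, the assignment above fills every part to its prescribed size with a sum divisible by $N$, and since the pairs (together with the element $n$ when $n$ is odd) exactly exhaust $\{1,\ldots,n\}$, the sets $R_1,\ldots,R_t$ form the desired partition.
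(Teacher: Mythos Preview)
The paper does not supply its own proof of this lemma: it is quoted verbatim from~\cite{MR2510327} and used as a black box, so there is nothing in the present paper to compare your argument against.

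On the merits, your outline is the standard route and is sound up to the point you yourself flag. Pairing $\{i,N-i\}$ to handle the even parts, parking the fixed point $n$ in one odd part when $n$ is odd, and then splitting three complementary pairs into two zero-sum triples for each matched pair of odd parts is exactly how the original proof proceeds. The one place where your write-up is still a plan rather than a proof is the ``tight'' case: you correctly observe that when every part has size~$3$ the balanced triples must partition all of $\{1,\ldots,\lfloor n/2\rfloor\}$, invoke a Skolem-type construction, and note that a parity obstruction forces one or two triples of the alternative shape $a_1+a_2+a_3=N$, but you do not actually exhibit the construction. That construction exists (it is the content of the cited lemma, and ultimately of Skolem's and O'Keefe's results on Steiner triples), so the approach goes through; but as written your proposal defers precisely the step that carries all the combinatorial weight. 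If you want a self-contained argument you should either write out the explicit triples in the four residue classes of $\lfloor n/2\rfloor \pmod 4$, or cite the Skolem/O'Keefe theorem directly.
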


The following result was proved in~\cite{SYZ} without the furthermore part. 
However, the furthermore part is easy to obtain by following the same proof of Lemma 2.2 in 
\cite{SYZ} by just letting vertices in $T$ to be not the endvertices of 
the edge-disjoint trails that decompose $E(G)$, which can be definitely guaranteed
by the conditions imposed on  $T$.  
So we omit the proof. 

\begin{lemma}\label{consecutive}
Let $p, m $ be  integers with with $p\ge 0, m\ge 1$, and let $G$ be a graph with $m$ edges. Then there exist an
orientation $D$ of $G$ and a bijections $\sigma: A(D)\rightarrow \{p+1,\ldots, p+m\}$ such that for each $v\in V(G)$,  
\begin{eqnarray*}
%-\lfloor \frac{d(v)-1}{2}\rfloor -(p+m)\le &  s(D,\sigma_1)(v)&\le -\lfloor \frac{d(v)-1}{2}\rfloor +(p+m), \quad \text{and }\\
 -(p+m)+\big\lfloor \frac{d_G(v)-1}{2}\big\rfloor\le &  s_{[D,\sigma]}(v)&\le \big\lfloor \frac{d_G(v)-1}{2}\big\rfloor +(p+m). 
\end{eqnarray*}
Furthermore, for $T\subseteq V(G)$  and each $v\in T$, if $d_G(v)$ is even and $N_{G}(v)\cap (V(G)\setminus T)\ne \emptyset$, 
then we can choose $\sigma$ so that 
%$s(D,\sigma_1)(v)=-\frac{d_G(v)}{2},  \quad 
$s_{[D,\sigma]}(v)=\frac{d_G(v)}{2}$. 
\end{lemma}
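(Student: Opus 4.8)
The plan is to read off both the orientation and the labels from a single Euler tour of an auxiliary graph, arranged so that at almost every vertex the incoming and outgoing labels occur in consecutive pairs that cancel down to a controlled constant. Since the oriented sum at a vertex depends only on the edges of its own component, and a disjoint block of the integers in $[p+1,p+m]$ can be assigned to each component, it suffices to work one component at a time. To obtain an even-degree structure I would add a single new vertex $w$ and join it to every odd-degree vertex of $G$; because the number of odd-degree vertices is even, every vertex of the resulting graph $G^+$ has even degree, and each component of $G^+$ is connected and Eulerian. Applying Theorem~\ref{Euler} to each component yields a closed Euler tour. In the component containing $w$ I would start and end the tour at $w$; in a purely even component of $G$ (which meets no edge at $w$) I would start at a vertex chosen later.

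Next I would fix the labeling. Traversing each Euler tour, I assign the integers of that component's block to the real edges of $G$ in strictly decreasing order, in the order traversed, skipping the dummy edges at $w$, and I orient every real edge in the direction of traversal. The point of the \emph{decreasing} convention is the local cancellation: whenever the tour enters a real vertex on a real edge and leaves on the next real edge, these carry consecutive labels, the first entering and the second leaving, so their net contribution to $s_{[D,\sigma]}(v)$ is $\ell-(\ell-1)=+1$. Hence a vertex $v$ with $d_G(v)$ even that meets no edge at $w$ is visited $d_G(v)/2$ times, each visit internal, so $s_{[D,\sigma]}(v)=d_G(v)/2$; a vertex $v$ with $d_G(v)$ odd meets exactly one edge at $w$, so $(d_G(v)-1)/2$ of its visits are internal and one visit pairs a real edge (label $\ell\le p+m$) with a dummy edge, giving $s_{[D,\sigma]}(v)=(d_G(v)-1)/2\pm\ell$. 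Since $1\le\ell\le p+m$ and $\lfloor(d_G(v)-1)/2\rfloor$ equals $d_G(v)/2-1$ or $(d_G(v)-1)/2$ according to the parity of $d_G(v)$, each of these values lies in $[\lfloor(d_G(v)-1)/2\rfloor-(p+m),\ \lfloor(d_G(v)-1)/2\rfloor+(p+m)]$. The only remaining vertex to check is the basepoint of a purely even component, whose sum works out to $d_G(v)/2-b$ with $b\le m$ the size of that component's block, again inside the window.

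The delicate point, and the only place the hypotheses of the furthermore part are used, is the basepoint. Deleting the dummy edges from the tour of the $w$-component breaks it into open trails whose endvertices are exactly the odd-degree vertices of $G$, so the furthermore part is precisely the request that no controlled $v\in T$ be an endvertex of a trail (equivalently, a basepoint), which is what the excerpt's remark describes. In the $w$-component the basepoint is $w$ itself, which is deleted, so every even-degree vertex of $G$ there already achieves the exact value $d_G(v)/2$. In a purely even component the single Euler tour has one basepoint carrying the extra $-b$ term, so I must keep every relevant $v\in T$ off that role. This is where $N_G(v)\cap(V(G)\setminus T)\neq\emptyset$ enters: it guarantees that any purely even component containing a vertex of $T$ I must control also contains a neighbour outside $T$, and an Euler tour may be started at any vertex, so I would base the tour there. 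Every even-degree $v\in T$ is then internal and receives $s_{[D,\sigma]}(v)=d_G(v)/2$.

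I expect the main obstacle to be exactly this sign-and-endpoint bookkeeping: guaranteeing that the lone unbalanced contribution at each odd-degree vertex, and the extra $-b$ at each purely even basepoint, stays within the slack $p+m$. That constraint is what forces the decreasing-label convention (the increasing one would produce a term $-(d_G(v)-1)-\ell$ that can exceed the lower bound by $d_G(v)-1$) and the careful choice of basepoints outside $T$. By contrast, the existence of the required Euler tours and trail decomposition is routine given Theorem~\ref{Euler}, so the real work is confirming these boundary contributions rather than building the combinatorial structure.
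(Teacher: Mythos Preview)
Your proposal is correct and follows essentially the same approach that the paper sketches (and attributes to \cite{SYZ}): build an Eulerian auxiliary graph, read off a trail decomposition of $G$, orient along the trails, and label so that consecutive real edges at an internal visit contribute $+1$; the furthermore clause is handled exactly as the paper indicates, by keeping every controlled $v\in T$ off the role of trail endvertex/basepoint. The one cosmetic difference is that you route all odd-degree vertices through a single hub $w$, whereas the paper (cf.\ the proof of Lemma~\ref{Teven}) pairs odd-degree vertices directly; both yield the same trail decomposition, and your decreasing-label convention and endpoint bookkeeping are accurate.
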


%\begin{lemma}\label{partitions2}
%	Let $t, n$ be integers with $t\ge 1$ and $n\ge 2$, and let 
%	$n=r_1+\ldots+r_t$ be a partition of $n$, where  
%	$r_i $  is an integer that is at least 2 for each $i\in [1,t]$. 
%	Then the set $\{1,\ldots, n\}$ can be partitioned into 
%	pairwise disjoint subsets $R_1, \ldots, R_t$
%	such that for each $i\in [1,t]$, $|R_i|=r_i$
%	and $\sum_{r\in R_i} r \equiv 0 (\md n+1)$ if $n$ is even, and 
%	$\sum_{r\in R_i} r \equiv 0 (\md n)$ if $n$ is odd. 
%\end{lemma}

% Lemma 2.3 ([10]) Let t ≥ 1 and n ≥ 2 be integers and let n = r1 + · · · + rt be a partition of n,
%where ri ≥ 2 is an integer for all i ∈ [t]. Then the set {1, 2, . . . , n} can be partitioned into pairwise
%6
%disjoint subsets R1, . . . , Rt such that for all i ∈ [t], |Ri
%| = ri
%, and P
%r∈Ri
%r ≡ 0 (mod n+1) if n is
%even and P
%r∈Ri
%r ≡ 0 (mod n) if n is odd.

\begin{lemma}\label{Teven}
	Let $p, m $ be  integers with $p\ge 0$ and $m\ge 1$, and let $G[S,T]$ be a bipartite graph with $m$ edges such that every vertex from $T$ has an even degree in $G$ (so $m$ is even). If $m\equiv 0 \pmod{4}$, let $\delta_m=p+m$; and if $m\equiv 2 \pmod{4}$,
	 let $\delta_m=p+m+1$. 
	Then there exist an
	orientation $D$ of $G$ and a bijection $\sigma: A(D)\rightarrow \{p+1,\ldots, p+m-1\}\cup\{\delta_m\}$ such that 
	\begin{eqnarray*}
		 & s_{[D,\sigma]}(v)&=
		  -d_G(v) \quad \quad\quad \quad\,\,\,\,
		   \text{for each $v\in T$, and }\\
	\big\lfloor \frac{d_G(v)-1}{2}\big\rfloor -\delta_m\le & s_{[D,\sigma]}(v)&\le \big\lfloor \frac{d_G(v)-1}{2}\big\rfloor +\delta_m \quad \text{for each $v\in S$}. 
	\end{eqnarray*}
\end{lemma}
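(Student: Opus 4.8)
The engine of the proof is a single property of the label set that is forced by the definition of $\delta_m$: the set $\{p+1,\dots,p+m-1\}\cup\{\delta_m\}$ can be partitioned into exactly $m/2$ pairs, each of the form $\{a,a+2\}$. After subtracting $p$ from every label we may assume $p=0$. If $m\equiv 0\pmod 4$ the set is $\{1,\dots,m\}$, which splits into consecutive blocks $\{4i+1,4i+2,4i+3,4i+4\}$, and each block yields the pairs $\{4i+1,4i+3\}$ and $\{4i+2,4i+4\}$. If $m\equiv 2\pmod 4$ the set is $\{1,\dots,m-1,m+1\}$; here I apply the same block construction to $\{1,\dots,m-2\}$ and pair the two leftover labels $\{m-1,m+1\}$. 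The reason $\delta_m$ must replace $p+m$ by $p+m+1$ in the second case is a parity obstruction: every pair $\{a,a+2\}$ has even sum, so a difference-$2$ partition can exist only if the labels sum to an even number, which fails for $\{1,\dots,m\}$ precisely when $m\equiv2\pmod4$.

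The point of such a partition is that it handles the vertices of $T$ for free. I will produce an orientation and a bijection in which each pair $\{a,a+2\}$ is carried by two edges that share a common vertex $t\in T$ (so they form a path $v_1tv_2$ with $v_1,v_2\in S$), with the label $a$ on the arc entering $t$ and $a+2$ on the arc leaving $t$; such a pair contributes $a-(a+2)=-2$ to $s_{[D,\sigma]}(t)$. Since every edge has a unique endvertex in $T$ and $d_G(t)$ is even, the edges at $t$ group into $d_G(t)/2$ such pairs, giving $s_{[D,\sigma]}(t)=-d_G(t)$ exactly, no matter which difference-$2$ pair is used at which position. To choose the pairing at the $T$-vertices and the assignment of pairs so that the $S$-sums also stay in range, I decompose $E(G)$ into trails. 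Because every vertex of $T$ has even degree, the odd-degree vertices all lie in $S$; by Theorem~\ref{Euler} (applied componentwise, after adjoining a vertex joined to all odd-degree vertices of $S$), $G$ is the edge-disjoint union of closed trails and open trails whose endvertices are exactly the odd-degree vertices of $S$, each vertex of $T$ occurring only internally. Every trail alternates between $S$ and $T$ and has even length. Orienting each trail along its traversal and labelling the concatenated trails with the block pattern above makes each consecutive in/out pair at an internal $T$-vertex a difference-$2$ pair (smaller label in, larger out), so that $s_{[D,\sigma]}(t)=-d_G(t)$; at an internal $S$-vertex the two consecutive edges of a visit receive labels differing by $1$, contributing $\pm1$.

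With this labelling the $S$-bound is immediate at most vertices. If $v\in S$ has even degree and every visit to it is internal, then $s_{[D,\sigma]}(v)$ is a sum of $d_G(v)/2$ terms each equal to $\pm1$, hence lies well within $\delta_m$ of $\big\lfloor\frac{d_G(v)-1}{2}\big\rfloor$. The base vertex of a closed trail contributes, in addition, the difference (largest label of its block) $-$ (smallest label of its block), which is a positive number smaller than $\delta_m$, so the bound still holds. An endvertex of an open trail contributes one extra ``half-edge'': if that arc enters $v$ it adds $+\ell\le\delta_m$, and one checks directly that the required inequalities hold for every label $\ell$. The only genuinely delicate case is an open-trail endvertex whose half-edge \emph{leaves} $v$, contributing $-\ell$; here the lower bound $s_{[D,\sigma]}(v)\ge\big\lfloor\frac{d_G(v)-1}{2}\big\rfloor-\delta_m$ forces the constraint $\ell\le\delta_m-d_G(v)+1$, and in the block pattern the label on this half-edge is essentially the smallest label of that trail's block, i.e.\ roughly the total length of all trails placed before it.

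I expect this last case to be the crux, and I would resolve it by choosing the order in which the trails are assigned their label-blocks. Orient each open trail so that one fixed endpoint is its ``out''-endvertex; these out-endvertices are distinct vertices of $S$, so their degrees $d'_1,\dots,d'_r$ satisfy $\sum_i d'_i\le\sum_{v\in S}d_G(v)=m$, while the trail lengths $L_1,\dots,L_r$ are each at least $2$ and sum to $m$. The requirement becomes: order the trails so that, for the trail placed at position $j$, the cumulative length $C_j$ of the earlier trails obeys $C_j+d'_{[j]}\le m$. Such an order exists, and can be built greedily from the last position backwards: at a stage with total remaining length $\Sigma$ and already-placed edges of total length $T=m-\Sigma$, a remaining trail $i$ is admissible when $d'_i\le L_i+T$, and a short computation using $\sum_i d'_i\le m$ and $d'_i\le m$ shows that an admissible trail always exists, so the process never stalls. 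Verifying this scheduling, checking the $\pm1$ estimates against the constant $\delta_m$, and tracking the $O(1)$ corrections coming from the block pattern and from the swapped pair $\{p+m-1,\delta_m\}$ in the case $m\equiv2\pmod4$ is where essentially all of the work lies; the condition $s_{[D,\sigma]}(t)=-d_G(t)$ on $T$ and the bounds at the even-degree vertices of $S$ come almost for free.
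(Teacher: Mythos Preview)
Your core approach—Euler-trail decomposition with all trail endpoints in $S$, orientation along the trails, and the block-of-four label pattern so that each in/out pair at a $T$-vertex carries labels $\{a,a+2\}$—is exactly the paper's, and the conclusion $s_{[D,\sigma]}(t)=-d_G(t)$ for $t\in T$ is obtained identically.

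The divergence is entirely in how the $S$-bound is argued. The paper performs \emph{no} trail scheduling whatsoever: it simply asserts that each internal pair $(f_j,e_{j+1})$ at a vertex of $S$ contributes $+1$, counts $\lfloor (d_G(v)-1)/2\rfloor$ such pairs, bounds the one or two leftover edge-contributions by $\delta_m$ in absolute value, and reads off the stated inequality in one line. You instead (correctly) compute that with the block pattern the pair contribution is $+1$ inside a block but $-1$ across block boundaries, hence only $\pm 1$; this is what leads you to flag the out-endvertex lower bound as ``delicate'' and to introduce the greedy trail-ordering argument. That entire scheduling apparatus is absent from the paper—so you are doing strictly more than the paper does, aimed at a point the paper dispatches (somewhat loosely) in a single sentence.

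If you keep the scheduling route, note that your sketch is not yet complete: closed trails also need positions in the ordering even though they carry no out-endvertex constraint; a vertex can simultaneously be the base of a closed trail and an internal vertex of other trails, which mixes the ``$\pm1$'' estimate with the ``largest minus smallest label'' term; and in the case $m\equiv 2\pmod 4$ the swapped label $\delta_m=p+m+1$ perturbs your ``first label of the block'' estimate by $1$. These are the residual checks you would still owe.
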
	

\begin{proof}
Suppose $G$ has in total $2\ell$ vertices of odd degree for some integer $\ell\ge 0$.  We obtain a new graph $G^*$ by pairing up  these vertices  into $\ell$
pairs,  and for each pair,   adding an edge joining the two vertices. 
Note that $G^*=G$ if $\ell=0$. 

 Each component of $G^*$ has  an Euler tour by  Theorem~\ref{Euler}. 
 By deleting all the edges in $E(G^*)\setminus E(G)$, we partition 
 all edges of $G$ into $\ell$ trails $T_1, T_2, \ldots, T_\ell$  (each $T_i$ is either open or closed).   For each $i\in[1,\ell]$,  let 
 $$
T_i=x_{t_{i-1}+1} e_{t_{i-1}+1}y_{t_{i-1}+1}  f_{t_{i-1}+1}x_{t_{i-1}+2} \ldots y_{t_i-1}f_{t_i-1}x_{t_i},  
 $$
 where $t_0=0$. Note that  $t_m-1=m/2$  and $|E(T_i)|=2(t_i-1-t_{i-1})$. 
 Since for every $v\in T$, $d_G(v)$ is even, we 
 can further assume that in each $T_i$,  for each $j\in [t_{i-1}+1, t_i]$,  
 $$x_j\in S \quad \text{and} \quad y_j\in T.$$
 Also by the construction of $T_i$'s, each vertex from $S$
 is the endvertex of at most one open trail. 
 
% In other words, $E(G)=\{e_1, e_2, \ldots, e_{m}\}$
% and for each $j\in [1,m]$, 
% $$e_j=
% \begin{cases}
% s_jt_j, & \text{if $j$ is odd};\\ 
% t_js_{j+1}, & \text{if $j$ is even}. 
% \end{cases}
% $$
 For each $j\in [1,m/2-1]$,  we direct each edge 
 $e_j$  from $x_j$ to $y_j$,  and direct each edge $f_j$ from $y_j$ to $x_{j+1}$.  
 Denote by $D$ this  orientation of $G$. 

If $m\equiv 0 \pmod{4}$,  for each $i\in [1,m/4]$,  let 
\begin{eqnarray*}
\sigma(e_{2i-1})=4i-3,\quad  &\sigma(f_{2i-1})&=4i-1;\\
\sigma(e_{2i})=4i-2,   \quad &\sigma(f_{2i})&=4i.\\
\end{eqnarray*}
If $m\equiv 2 \pmod{4}$, let 
\begin{eqnarray*}
	\sigma(e_{2i-1})=4i-3,\quad  &\sigma(f_{2i-1})&=4i-1 \quad \text{for each $i\in [1,\frac{m+2}{4}]$};\\
	\sigma(e_{2i})=4i-2,   \quad &\sigma(f_{2i})&=4i\quad \text{for each $i\in [1,\frac{m-2}{4}]$}.\\
\end{eqnarray*}
By the definition of $\sigma$ above, for each $j\in [1,m/2]$,  $e_j,f_j$
contributes $-2$ to the vertex sum at $y_j$  that is shared by $e_j$ and $f_j$. 
Since  for each vertex $y\in T$, the edges incident to $y$ in $G$
are partitioned into $\frac{d_G(y)}{ 2}$ pairs of edges in the form of 
$e_j, f_j$,  it holds $s_{[D,\sigma]}(y)=
-d_G(y)$. 

For each $j\in [1,m/2-1]$,  $f_j,e_{j+1}$
contributes $1$ to the vertex sum at $x_{j+1}$  that is shared by $f_j$ and $e_{j+1}$. 
For each vertex $x\in S$, the edges incident to $x$ in $G$
are partitioned into  at least $\lfloor\frac{d_G(x)-1}{ 2}\rfloor$ pairs of edges in the form of 
$f_j, e_{j+1}$.  If $d_G(x)$ is odd, then $x$
is the endvertex of exactly  one open trails in $\{T_1, T_2, \ldots, T_\ell\}$. 
Thus, the edge  incident to $x$ not counted in the pairs $f_j, e_{j+1}$ 
has a label in  $[-\delta_m, \delta_m]$. 
If $d_G(x)$ is even, then $x$
can be the endvertices  of at most one closed trails in $\{T_1, T_2, \ldots, T_\ell\}$.
Thus, the two edges  incident to $x$ not counted in the pairs $f_j, e_{j+1}$ 
have a label in  $[-\delta_m, \delta_m]$:   one is negative and the other is 
positive, which add up to a value in  $[-\delta_m, \delta_m]$.  Hence,  for each $x\in S$,  it holds 
 $\lfloor\frac{d_G(x)-1}{2}\rfloor -\delta_m\le  s_{[D,\sigma]}(x)\le \lfloor \frac{d_G(x)-1}{2}\rfloor +\delta_m$. 
This finishes the proof of Lemma~\ref{Teven}. 
\end{proof}

The following result on bipartite graphs is heavily used in our proofs,  which  might be 
of independent interest to other applications also.  
\begin{lemma}\label{matching-in-bipartite}
	If $G$ is a bipartite graph, then $V(G)$ has a partition $S\cup T$
that satisfies the following conditions:
	\begin{enumerate}[(a)]
		\item $G$ has a matching  $M$ with $M\subseteq E_G(S,T)$ and $M$ saturates $S$; 
%		\item $M$ 
%		saturates $S$;
		\item $T$ is an independent set in $G$.
	\end{enumerate}
	\end{lemma}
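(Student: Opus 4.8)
The plan is to derive the partition from the classical structure theory behind K\"onig's theorem, i.e.\ from a maximum matching together with its alternating-reachability set. Fix a bipartition $(A,B)$ of $G$ and let $M$ be a \emph{maximum} matching of $G$. Let $U\subseteq A$ denote the set of $M$-unsaturated vertices that lie in $A$, and let $Z\subseteq V(G)$ be the set of all vertices reachable from $U$ by $M$-alternating paths (paths starting in $U$ whose edges alternate between $E(G)\setminus M$ and $M$). I would then take
$$ S=(A\setminus Z)\cup(B\cap Z), \qquad T=V(G)\setminus S=(A\cap Z)\cup(B\setminus Z), $$
and claim that $S\cup T$ is the required partition, with $M$ itself serving as the matching in~(a).

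First I would verify (b), that $T$ is independent. As $G$ is bipartite, any edge inside $T$ would join some $a\in A\cap Z$ to some $b\in B\setminus Z$. Fixing an $M$-alternating path $P$ from $U$ to $a$, I would argue by cases on whether $ab\in M$: if $ab\notin M$ one extends $P$ along $ab$ to reach $b$, and if $ab\in M$ then (since $a\notin U$ as $a$ is saturated) $P$ must already reach $a$ through its unique matching edge $ab$ and hence passes through $b$. Either way $b\in Z$, contradicting $b\in B\setminus Z$. Thus no edge lies inside $T$, so $T$ is independent and, equivalently, $S$ is a vertex cover.

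Next I would establish (a) in two steps. The first is that every edge of $M$ has exactly one endpoint in $S$, so that $M\subseteq E_G(S,T)$: for a matched pair $ab$ with $a\in A$ and $b\in B$, a short alternating-path argument shows that $a\in Z$ forces $b\in Z$ while $a\notin Z$ forces $b\notin Z$, so precisely one of $a,b$ lands in $S$. The second is that $M$ saturates $S$. A vertex $a\in A\setminus Z$ is saturated because $U\subseteq Z$ means every unsaturated vertex of $A$ lies in $Z$; and a vertex $b\in B\cap Z$ must be saturated, for otherwise an alternating path from $U$ to the unsaturated $b$ would be an $M$-augmenting path, contradicting the maximality of $M$. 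Combining this with the first step, each vertex of $S$ is matched by $M$ across the cut into $T$, which gives (a).

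The hard part is not any single computation but the careful alternating-path bookkeeping that simultaneously yields both conclusions: the maximality of $M$ is exactly what prevents augmenting paths and thereby forces every vertex of $B\cap Z$ to be saturated, while the reachability definition of $Z$ is what makes $T=(A\cap Z)\cup(B\setminus Z)$ edgeless. Keeping straight which endpoint of each alternating path is reached via a matching edge versus a non-matching edge is the only delicate point, and it is precisely the content of the standard proof of K\"onig's theorem, so I expect it to go through cleanly.
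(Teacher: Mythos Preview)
Your proof is correct and is essentially the same construction as the paper's: both take a maximum matching $M$, build the alternating-reachability set from the $M$-unsaturated vertices on one side, and set $S$ and $T$ accordingly, obtaining literally the same partition. The only difference is packaging---you invoke the K\"onig alternating-set $Z$ directly, while the paper unrolls the same set via the layers $B_0,A_0,B_1,A_1,\ldots$ and then treats the unsaturated vertices $Y_0$ on the other side by a separate augmenting-path argument; your single case analysis on edges between $A\cap Z$ and $B\setminus Z$ absorbs that extra step.
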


\begin{proof}
	It suffices to prove the statement only for every component of $G$. 
	Thus we may assume that $G$ is connected. 
	Let $[X,Y]$ be a bipartition of $G$. Assume, without loss of generality, that 
	$|X|\le |Y|$. 
	Let $M$ be a matching of $G$ that saturates the largest number of vertices from $X$. We will find a desired partition $S\cup T$ of $V(G)$
	based on $X$ and $Y$.
	
	If $X\setminus V(M)=\emptyset$, then we are done by letting $S=X$ and $T=Y$. Thus, $X\setminus V(M)\ne \emptyset$.  Let 
	\begin{align*}
X_1&=X\cap V(M),  \quad &X_0&=X\setminus X_1, \\
Y_1&=Y\cap V(M), \quad &Y_0&=Y\setminus Y_1.
	\end{align*}
Since $|X\cap V(M)|=|Y\cap V(M)|$ and $|Y|\ge |X|$, $X_0\ne \emptyset$ implies  $Y_0\ne \emptyset$.   By the maximality of $M$, it holds 
\begin{equation}\label{noedge1}
E_G(X_0,Y_0)=\emptyset. 
\end{equation}
Let 
	\begin{align*}
B_0&=N_G(X_0), & A_0&=M(B_0), & C_0&=N_G(Y_0),  & D_0&=M(C_0). 
\end{align*}
Clearly, $B_0, C_0\ne \emptyset$ as $G$ is connected. 
For each integer $i$ with $i\ge 1$, define 
$$
B_i=N_G(A_{i-1})\setminus \bigcup_{j=0}^{i-1}B_j, \quad A_i=M(B_i). 
$$
Let
\[
B=\cup_{i=0}^\infty B_i, \quad A=\cup_{i=0}^\infty A_i. 
\]
By the definition, $B_i\cap B_j=\emptyset$ and $A_i\cap A_j=\emptyset$
for every pair of $i,j$ with $i,j\ge 0$ and $i\ne j$. 
Since $|A_i|=|B_i|$ for each $i$ with $i\ge 0$, it holds 
\begin{equation}\label{AB}
|A|=|B|.
\end{equation}

Let $$X_r=X_1\setminus A, \quad Y_r=Y_1\setminus B.$$
By the definition of $A$, 
\begin{equation}\label{AYr}
E_G(A,Y_r)=\emptyset.  
\end{equation}
Let 
$$
S=B\cup X_r, \quad T=A\cup Y_r\cup X_0\cup Y_0. 
$$

It is left to show that $S\cup T$ is a desired partition of $V(G)$. 
Since $|A|=|B|$ by \eqref{AB}, $|S|=|X_1|$. 
Furthermore, by the definitions of $A$ and $B$, $A=M(B)$, and consequently $X_r=M(Y_r)$. Thus, 
 $M$ is still a matching in $G$ that saturates $S$ and has size $|S|$, 
 and $M\subseteq E_G(S,T)$. 
We only show that $T$ is an independent in $G$. As each of  $A, Y_r$, $X_0$
and $Y_0$ is an independent set in $G$, \eqref{noedge1} and \eqref{AYr}, respectively,  
implies  that $A\cup Y_r$ and $X_0\cup Y_0$ 
are independent sets in $G$.   
Since $E_G(X_0, A)=\emptyset$  and $E_G(X_0, Y_r)=\emptyset$ by $N_G(X_0)=B_0\subseteq B$,  $A\cup Y_r\cup X_0$ 
is an independent set in $G$. 
Since $E_G(Y_0,Y_r)=\emptyset$ and $E_G(Y_0, X_0)=\emptyset$ by~ \eqref{noedge1}, we are only left to show that  $E_G(Y_0,A)=\emptyset$.

It suffices to only show that $D_0\subseteq Y_r$.  Since  $D_0\subseteq Y_r$ implies that $C_0\subseteq X_r$ by the definitions 
of the sets $A$ and $B$, and   $C_0\subseteq X_r$ 
implies that $C_0\cap A=\emptyset$, which yields $E_G(Y_0, A)=\emptyset$.  

To show $D_0\subseteq Y_r$, we just show that 
for each $i$ with $i\ge 0$,  $E_G(A_i, D_0)=\emptyset$. 
Assume to the contrary and let $k$ be the  smallest index such that 
$E_G(A_k, D_0)\ne \emptyset$.
Let $d_0\in D_0$ and $a_k\in A_k$ such that $d_0a_k\in E(G)$, $b_k= M(a_k)$, 
$a_{k-1}\in A_{k-1}$ such that $a_{k-1}b_k\in E(G)$.
In general, for each $i=k-1, k-2,\ldots, 1$, let 
$$
b_i=M(a_i), \quad a_{i-1}\in A_{i-1} \quad \text{such that $a_{i-1}b_i\in E(G)$}. 
$$ 
Furthermore, let $b_0=M(a_0)$ and $x_0\in X_0$
such that $b_0x_0\in E(G)$, and $c_0=M(d_0)$ and $y_0\in Y_0$
such that $c_0y_0\in E(G)$. 

Note that for $i,j\in [0,k]$ with $i\ne j$, $a_i\ne a_j$
and $b_i\ne b_j$, as $A_i\cap A_j=\emptyset$ and $B_i\cap B_j = \emptyset$. 
Furthermore, by the minimality of $k$, $a_i\ne d_0$ and $b_i\ne d_0$.  
Thus  
$$P:=y_0c_0d_0a_kb_ka_{k-1}b_{k-1}\ldots b_1a_0b_0x_0$$
is an $M$-augmenting path, and $M':=(M\setminus E(P))\cup (E(P)\setminus M)$
is a matching in $G$ such that 
$|V(M')\cap X|>|V(M)\cap X|$, showing a contradiction to the choice of $M$.
Therefore, $E_G(A_i, D_0)=\emptyset$ for each $i$ with $i\ge 0$. 
This  completes the proof.  
\end{proof}
	
	\section{Proof of Theorem~\ref{th1}}
	
	Let  $S\cup T$ be  a partition of $V(G)$
	satisfying the requirements in Lemma~\ref{matching-in-bipartite}.
	Let 
	$$
	n_1=|S|, \quad n_2=|T|, \quad S=\{x_1, x_2, \ldots, x_{n_1}\}, \quad 
	\quad   T=\{y_1, y_2, \ldots, y_{n_2}\}. 
	$$
	
	Assume, without loss of generality, that  
	$$M=\{x_1y_1, x_2y_2, \ldots, x_{n_1}y_{n_1}\}.$$.
	For each $i\in [n_1+1, n_2]$, let $e_i$ be an edge incident to $y_i$ 
	in $G$, and let 
	$$
	M^*=M\cup \{e_{n_1+1}, \ldots, e_{n_2}\}.
	$$
	In other words,  each vertex from  $T$ is incident to one and exactly one edge from $M^*$.  
	Furthermore, let 
	$$
	H=G-E(G[S])-M^*, \quad m_1=|E(G[S])|, \quad m_2=e(H). 
	$$
	Clearly, $m_1+m_2+|M^*|=m_1+m_2+n_2=m:=e(G)$.
	Let $T_1=\{y\in Y: d_G(y)=1\}$ and $t_1=|T_1|$.  Assume, without loss of generality, that 
	$T_1=\{y_{n_2-t_1+1},y_{n_2-t_1+2}, \ldots, y_{n_2}\}$. 
	Clearly,  $d_H(y_i)=0$ for each $i\in [n_2-t+1,n_2]$. 
	We consider two cases below regarding how large $n_2$ is. 
	
	\medskip 
	
	{\bf \noindent Case 1: $n_2\le m_2$}. 
	
	\medskip 
	This case basically follows the same idea as  in the Proof of Theorem 1.5 in~\cite{SYZ}, but we repeat the process for self-completeness. 
	
 We give an orientation  $D$ of $G$ and a labeling $\sigma$ of $A(D)$ through four parts below. 
\begin{enumerate}[(1)]
	\item Orient and label $H$: direct each edge from $T$ to $S$. 
	For each $i\in [1, n_2-t_1]$, let $A_i$ be the set of all edges incident to $y_i$ in $H$. 
	Clearly,  $|A_1|+|A_2|+\ldots+|A_{n_2-t_1}|=m_2$. 
	Since $G$ has no vertex of degree 2 or isolated vertex,  $|A_i|\ge 2$. 
	By applying Lemma~\ref{partitions} to $m_2$ with 
	$t=n_2-t_1$ and $r_i=|A_i|$ for each $i\in[1,t]$, the set $\{1,2,\ldots, m_2\}$
	can be partitioned into $R_1, R_2, \ldots, R_{n_2-t_1}$ such that for each $i\in [1,n_2-t_1]$, 
	$|R_i|=|A_i|$ and $\sum_{r\in R_i} r \equiv 0 \pmod{m_2}$ if $m_2$ is even, and 
	$\sum_{r\in R_i} r \equiv 0 \pmod{m_2}$ if $m_2$ is odd.  Label edges in $A_i$
	by integers in $R_i$ in an arbitrary way as long as distinct edges receive distinct labels. 
	\item  Orient and label $G[S]$:  applying Lemma~\ref{consecutive} to get the orientation and labeling with $p=m_2$ and $m=m_1$; 
	\item  Orient and label $M^*\setminus M=\{e_{n_1+1}, \ldots, e_{n_2}\}$: direct each edge from $T$ to $S$, 
	and for each $i\in [n_1+1, n_2]$, assign $m_1+m_2+(i-n_1)$ to $e_i$. 
	
	\medskip
	
	Let $D^*$ be the union of the digraphs obtained through the three  parts above, and $\sigma^*$ be the  labeling on $A(D^*)$ consists of the  three labelings above.
	Assume that the sums at vertices from $S=\{x_1, \ldots, x_{n_1}\}$ satisfy 
	$$
	s_{[D^*,\sigma^*]}(x_1)\le	s_{[D^*,\sigma^*]}(x_2)\le \ldots \le s_{[D^*,\sigma^*]}(x_{n_1}). 
	$$
	
	\medskip

	\item  Orient and label  $M$: direct each edge from $T$ to $S$,   and for each $i\in [1, n_1]$, assign $m_1+m_2+n_2-n_1+i$ to $x_iy_i$.  
	
	 Let $D$ and $\sigma$ be the resulting orientation  and labeling, respectively. 
	It is clear that $\sigma$ is injective. We show that $\sigma$ 
	is an antimagic labeling of $D$. 
	By Step 4, we have 
	$$
	s_{[D,\sigma]}(x_1)<s_{[D,\sigma]}(x_2)<\ldots <s_{[D,\sigma]}(x_{n_1}). 
	$$
	Furthermore, for each $i\in [1,n_1]$, by Step 2,   $s_{[D^*,\sigma^*]}(x_i)\ge \lfloor\frac{d_{G[S]}(x_i)-1}{2}\rfloor -m_1-m_2$,  by Steps 3 and 4, we know $s_{[D,\sigma]}(x_i)\ge s_{[D^*,\sigma^*]}(x_i)+m_1+m_2+n_2-n_1+i>0$. 
	For each vertex $y_i\in T$, $i\in [1,n_2]$, all the edges incident to $y_i$
	are oriented towards $S$. Thus, $s_{[D,\sigma]}(y_i)<0$. 
	
	Thus, for each $x\in S$ and each $y\in T$, 
	$s_{[D,\sigma]}(x)>s_{[D,\sigma]}(y)$. 
	Therefore, it is left to only show that all vertices from $T$
have distinct sums under $\sigma$ in $D$. 

By Steps 1, 3 and 4,  for each $i\in [1,n_2]$ and for some integr $a_i\ge 0$, we have 
$$
|s_{[D,\sigma]}(y_i)|= 
\begin{cases}
a_im_2 +m_1+m_2+\sigma_i,  & \text{if $m_2$ is odd}, \\ 
a_i(m_2+1)+m_1+m_2+\sigma_i,  & \text{if $m_2$ is even}, 
\end{cases}
$$
where $\sigma_i\in [1,n_2]$ are all distinct. Since $n_2\le m_2$, 
for any two distinct $i,j\in [1,n_2]$, 
$$
s_{[D,\sigma]}(y_i)-s_{[D,\sigma]}(y_j)\not\equiv 
\begin{cases}
0 \pmod{m_2},  & \text{if $m_2$ is odd}, \\ 
0 \pmod{m_2+1},  & \text{if $m_2$ is even}. 
\end{cases}
$$
Consequently, $s_{[D,\sigma]}(y_i) \ne s_{[D,\sigma]}(y_j)$.  
\end{enumerate}
The proof for Case 1 is complete. 
	\medskip 

{\bf \noindent Case 2: $n_2\ge m_2+1$}. 

\medskip

In this case, we develop a result similar to Lemma~\ref{partitions}
but using nonconsecutive integers not necessarily starting at 1. 

For each $i\in [1,n_2-t_1]$, let $A_i$ be the set of all edges incident to $y_i$ in $H$. 
Clearly,  $|A_1|+|A_2|+\ldots+|A_{n_2-t_1}|=m_2$. 
Since $G$ has no vertex of degree 2 or isolated vertex,  $|A_i|\ge 2$. 
Let $m_2=3k+2\ell$, for some integers $k, \ell \ge 0$, 
where $k$ is the number of sets $A_i$'s  with an odd cardinality.  
We may assume that $k+\ell\ge 1$. Otherwise, we follow the same proof as in Case 1,
and the vertex sums at vertices from $T$ will naturally be all distinct since all these vertices have degree 1 in $G$. 

	\medskip 

{\bf \noindent Subcase 2.1: $k=0$}. 

\medskip
In this case, all $|A_i|$'s are even.
We give an orientation  $D$ of $G$ and a labeling $\sigma$ of $A(D)$ through four parts below. 
\begin{enumerate}[(1)]
	
	\item Orient and label $G[S]$:  applying Lemma~\ref{consecutive} to get the orientation and labeling with $p=0$ and $m=m_1$;
	\item Orient and label $H$: direct each edge from $T$ to $S$.
	 For each $i$,  
	partition all edges in $A_i$ into $|A_i|/2$ many 
	2-element subsets.  Thus, we have in total $m_2/2$ many 
	2-element subsets  $B_1, B_2, \ldots, B_{m_2/2}$ of edges. 
	For each  $B_i$, $i\in [1,m_2/2]$, 
	we assign 
	$$
	m_1+n_2+i, m-(i-1)
	$$ 
	to the two edges from it.
By the way above of assigning labels to edges in $A_i$'s, $i\in [1,n_2-t_1]$, the 
sum of labels assigned to edges from each $A_i$ is 
\begin{equation}\label{case21}
a_i (m+m_1+n_2+1) \quad \text{for some integer $a_i\ge 1$}. 
\end{equation}
	 
	\item Orient and label $M^*\setminus M=\{e_{n_1+1}, \ldots, e_{n_2}\}$: direct each edge from $T$ to $S$, 
	and for each $i\in [n_1+1, n_2]$, assign $m_1+(i-n_1)$ to $e_i$. 
	
	\medskip
	
	Let $D^*$ be the union of the digraphs obtained through the three  parts above, and $\sigma^*$ be the  labeling on $A(D^*)$ consists of the  three labelings above.
Assume that the sums at vertices from $S=\{x_1, \ldots, x_{n_1}\}$ satisfy 
	$$
	s_{[D^*,\sigma^*]}(x_1)\le	s_{[D^*,\sigma^*]}(x_2)\le \ldots \le s_{[D^*,\sigma^*]}(x_{n_1}). 
	$$
	
	\medskip

	\item Orient and label $M$: direct each edge from $T$ to $S$,   and for each $i\in [1, n_1]$, assign $m_1+n_2-n_1+i$ to $x_iy_i$.  
	
	Let $D$ and $\sigma$ be the resulting orientation  and labeling, respectively. 
	It is clear that $\sigma$ is injective. We show that $\sigma$ 
	is an antimagic labeling of $D$. 
	By Step 4, we have that 
	$$
	s_{[D,\sigma]}(x_1)<s_{[D,\sigma]}(x_2)<\ldots <s_{[D,\sigma]}(x_{n_1}). 
	$$
	Furthermore, for each $i\in [1,n_1]$, by Lemma~\ref{consecutive} and Step 1,   $s_{[D^*,\sigma^*]}(x_i)\ge \lfloor\frac{d_{G[S]}(x_i)-1}{2}\rfloor  -m_1$, we know $s_{[D,\sigma]}(x_i)\ge s_{[D^*,\sigma^*]}(x_i)+m_1+n_2-n_1+i\ge 0$. 
	For each vertex $y_i\in T$, $i\in [1,n_2]$, all the edges incident to $y_i$
	are oriented towards $S$. 	Thus, $s_{[D,\sigma]}(y_i)<0$.
	
	Thus, for each $x\in S$ and each $y\in T$, 
	$s_{[D,\sigma]}(x)>s_{[D,\sigma]}(y)$. 
	Therefore, it is left to only show that all vertices from $T$
	have distinct sums under $\sigma$ in $D$. 
	
	By Steps 2, 3 and 4, for each $i\in [1,n_2]$, we have 
	$$
	|s_{[D,\sigma]}(y_i)|= a_i(m_1+n_1+m+1)+m_1+\sigma_i \quad \text{for some integer $a_i\ge 1$}, 
	$$
	where $\sigma_i\in [1,n_2]$ are all distinct. Since $n_2< m_1+n_1+m+1$, 
	for any two distinct $i,j\in [1,n_2]$, 
	$$
	s_{[D,\sigma]}(y_i)-s_{[D,\sigma]}(y_j)\not\equiv 
	0 
	\pmod{m_1+n_1+m+1}
	$$
	Consequently, $s_{[D,\sigma]}(y_i) \ne s_{[D,\sigma]}(y_j)$.  
\end{enumerate}
The proof for Subcase 2.1 is complete.

\medskip 

{\bf \noindent Subcase 2.2: $k\ge 1$}. 

\medskip
Recall that  $m_2=3k+2\ell$ and $n_2\ge m_2+1$. 
Thus $m\ge n_2+m_2\ge  6k+4\ell+1 $, and 
$m-2k-\ell+2=m_2+n_2+m_1-2k-\ell+2>m_1+3k+\ell$. We assume, without loss of generality, that $|A_1|, \ldots, |A_k|$
are odd, and $|A_{k+1}|, \ldots, |A_{n_2-t_1}|$ are all even. 

We will use the labels from the set $A=[1,k]\cup [m_1+k+1, m_1+2k]\cup [m_1+3k+1,m_1+3k+\ell] \cup [m-2k-\ell+2, m-2k+1]\cup \{m-2k+2, m-2k+4, \ldots, m-2, m\}$ 
for edges from $A_i$'s. 
For each $i\in [1,k]$, edges in $A_i$ can be partitioned into 
one 3-subset, and $(|A_i|-3)/2$ many 2-subsets. 
For each $i\in [k+1, n_2-t_1]$, edges in $A_i$ can be partitioned into 
$|A_i|/2$ many 2-subsets. 
Let 
$B_1, B_2, \ldots, B_k$ be the $k$ 3-sets and $C_1, \ldots, C_\ell$ 
be the $\ell$ 2-sets  obtained by 
partition edges from each $A_i$'s. 
For each $i\in [1,k]$, we assign edges in each $B_i$
the following three numbers:
$$
i, \quad  m_1+k+i, \quad m-2i+2. 
$$

For each $i\in [1,\ell]$, we assign edges in each $C_i$
the following two  numbers:
$$
m_1+3k+i, \quad  m-2k+2-i. 
$$
By the way above of assigning labels to edges in $A_i$'s, $i\in [1,n_2-t_1]$, the 
sum of labels assigned to edges from each $A_i$ is 
\begin{equation}\label{case22}
a_i (m+m_1+k+2) \quad \text{for some integer $a_i\ge 1$}. 
\end{equation}

We give an orientation  $D$ of $G$ and a labeling $\sigma$ of $A(D)$ through four parts below. 
\begin{enumerate}[(1)]
	
	\item  Orient and label $G[S]$:  applying Lemma~\ref{consecutive} to get the orientation and labeling with $p=k$ and $m=m_1$;
	\item  Orient and label $H$: direct each edge from $T$ to $S$.
	Assign labels in the set $A$ to the edges in $\bigcup _{i=1}^{n_2-t_1}A_i$
	as described previously. 
	
Note that  the set of unused labels is $$B=[m_1+2k+1,m_1+3k]\cup[m_1+3k+\ell+1, m-2k-\ell+1]\cup \{ m-2k+3, m-2k+5, \ldots, m-1\},$$
	and $|B|=k+m-m_1-5k-2\ell+1+k-1=m_2+n_2-3k-2\ell=n_2$. 
	\item  Orient and label $M^*\setminus M=\{e_{n_1+1}, \ldots, e_{n_2}\}$: direct each edge from $T$ to $S$, 
	and assign the first $n_2-n_1$ smallest numbers from $B$ to 
	edges in  $M^*\setminus M$ such that distinct edges receive distinct labels. 
	
	\medskip
	
	Let $D^*$ be the union of the digraphs obtained through the three  parts above, and $\sigma^*$ be the  labeling on $A(D^*)$ consists of the  three labelings above. 
Assume that the sums at vertices from $S=\{x_1, \ldots, x_{n_1}\}$ satisfy 
	$$
	s_{[D^*,\sigma^*]}(x_1)\le	s_{[D^*,\sigma^*]}(x_2)\le \ldots \le s_{[D^*,\sigma^*]}(x_{n_1}). 
	$$
	
	\medskip

	\item  Orient and label $M$: direct each edge from $T$ to $S$,   and assign the 
	remaining  $n_2-(n_2-n_1)=n_1$ numbers  from $B$ 
	to edges in $M$ such that $x_iy_i$ is assigned with the 
	$i$-th smallest number. 
	
	Let $D$ and $\sigma$ be the resulting orientation  and labeling, respectively. 
	It is clear that $\sigma$ is injective. We show that $\sigma$ 
	is an antimagic labeling of $D$. 
	By Step 4, we have 
	$$
	s_{[D,\sigma]}(x_1)<s_{[D,\sigma]}(x_2)<\ldots <s_{[D,\sigma]}(x_{n_1}). 
	$$
	Furthermore, for each $i\in [1,n_1]$, by Lemma~\ref{consecutive} and Step 1,   $s_{[D^*,\sigma^*]}(x_i)\ge \lfloor\frac{d_{G[S]}(x_i)-1}{2}\rfloor  -m_1-k$, we know $s_{[D,\sigma]}(x_i)\ge s_{[D^*,\sigma^*]}(x_i)+m_1+2k+1\ge 0$. 
	For each vertex $y_i\in T$, $i\in [1,n_2]$, all the edges incident to $y_i$
	are oriented towards $S$. Thus, $s_{[D,\sigma]}(y_i)<0$. 
	
	Thus for each $x\in S$ and each $y\in T$, 
	$s_{[D,\sigma]}(x)>s_{[D,\sigma]}(y)$. 
	Therefore, it is left to only show that all vertices from $T$
	have distinct sums under $\sigma$ in $D$. 
	
	By Steps 2, 3, 4 and~\eqref{case22}, for each $i\in [1,n_2]$,  we have 
	$$
	|s_{[D,\sigma]}(y_i)|\ge  a_i(m+m_1+k+2)+\sigma_i \quad \text{for some integer $a_i\ge 0$}, 
	$$
	where $\sigma_i\in B$ are all distinct. Since $\sigma_i\le m-1< m+m_1+k+2$, 
	for any two distinct $i,j\in [1,n_2]$, 
	$$
	s_{[D,\sigma]}(y_i)-s_{[D,\sigma]}(y_j)\not\equiv 
	0 
	\pmod{m+m_1+k+2}
	$$
	Consequently, $s_{[D,\sigma]}(y_i) \ne s_{[D,\sigma]}(y_j)$.  
\end{enumerate}
The proof for Subcase 2.2 is complete.

\section{Proof of Theorem~\ref{th2}}

	Let $L$  be a spanning bipartite subgraph of $G$ with the maximum number of edges. Since $|E(L)|$ is maximum among all spanning bipartite subgraphs of $G$,
	$$
	d_L(v)\ge \frac{d_G(v)}{2} \quad \text{for every $v\in V(G)$}. 
	$$ 
	By Lemma~\ref{matching-in-bipartite}, we let $S\cup T$ 
	be a partition of $V(L)=V(G)$,  $M\subseteq E_L(S,T)$ 
	be a matching that saturates $S$ and has size $|S|$, and let  $L^*=L-E(L[S])$ 
	be the spanning  bipartite graph of $L$ between $S$ and $T$.  
	
	Let 
	$$
n_2=|S|, \quad 	n_1=|T|, \quad  S=\{x_1, x_2, \ldots, x_{n_2}\}, \quad 
	\quad   T=\{y_1, y_2, \ldots, y_{n_1}\}. 
	$$
	
	Assume, without loss of generality, that  
	$$M=\{x_1y_1, x_2y_2, \ldots, x_{n_1}y_{n_2}\}.$$
	For each $i\in [n_2+1, n_1]$, let $e_i$ be an edge incident to $y_i$ 
	in $L^*$, and let 
	$$
	M^*=M\cup \{e_{n_2+1}, \ldots, e_{n_1}\}.
	$$
	In other words,  each vertex from  $T$ is incident to one and exactly one edge in $M^*$.  
	Furthermore, let 
	$$
	H=L^*-M^*, \quad G_1=G-E(H)-M^*. 
	$$
	Note that for every vertex $y\in T$, 
	\begin{equation}\label{degreev}
	 d_H(v)=d_L(v)-1\ge  \frac{d_G(v)}{2}-1,  
	\end{equation}
and  $E(G_1)=E(G)\setminus (E(H) \cup M^*)=E(G[S])\cup E(G[T])\cup \big( E_G(S,T)\setminus E_{L^*}(S,T)\big)$. 
	We now modify $G_1$ to get a new graph by adding some edges from $H$
	such that in the new graph  the degree of every vertex from $T$ 
	is divisible by 4 and that every vertex from $T$ has a neighbor from $S$.  Specifically, 
	 for each $v\in T$, if $d_{G_1}(v) \equiv c \pmod{4}$, 
	 where $c=0,1,2,3$, we take exactly $4-c$ edges incident to $v$
	 in $H$ and add these $4-c$ edges into $G_1$.  Call $G_2$ the resulting in graph from $G_1$, and $H'$ the resulting in graph from $H$. 
	 From the construction, for each $v\in T$, 
		\begin{equation}\label{deg2}
	 d_{G_2}(v)\equiv 0 \pmod{4}, \quad d_{H'}(v)\ge d_{H}(v)+c-4, 
	 	\end{equation} 
	 where $c\in \{0,1,2,3\}$  satisfies $d_{G_1}(v) \equiv c \pmod{4}$. 
	 
	 We then split the bipartite graph $H'$ into two spanning subgraphs $H_1$
	 and $H_2$ of $H'$. For each $v\in T$, we let $A(v)$ be a set of $\frac{d_{G_2}(v)}{2}$ edges incident to $v$ in $H'$. Now let 
	 $$
	 V(H_2)=V(H'), \quad E(H_2)=\bigcup_{v\in T} A(v), \quad  H_1=H'-E(H_2). 
	 $$
	  From the construction and~\eqref{deg2}, for each $v\in T$, 
	 \begin{equation}\label{deg3}
	 d_{H_2}(v)=\frac{d_{G_2}(v)}{2}\equiv 0 \pmod{2}, \quad d_{H_1}(v)\ge d_{H}(v)+c-4-\frac{d_{G_2}(v)}{2}, 
	 \end{equation} 
	 where $c\in \{0,1,2,3\}$  satisfies $d_{G_1}(v) \equiv c \pmod{4}$. 
	 By~\eqref{degreev}, we have 
	 \begin{eqnarray}
	d_{H_1}(v)&\ge& d_{H}(v)+c-4-\frac{d_{G_2}(v)}{2} \nonumber \\\nonumber 
	&\ge & \lceil \frac{d_G(v)}{2}\rceil -1+c-4-\frac{d_{G_2}(v)}{2}\\\nonumber 
	&\ge & \lceil \frac{d_G(v)}{2}\rceil-1+c-4- \frac{1}{2}\big(\lfloor \frac{d_G(v)}{2} \rfloor +4-c\big)\\ \label{ge2}
	&\ge &  \frac{d_G(v)}{4}-7,  
	 \end{eqnarray}
	which is at least 2, since $\delta(G)\ge 33$. 
	 
	 Let $$m_1=e(H_1), \quad  m_2=e(G_2),\quad  m_3=e(H_2).$$  Note that $m_1+m_2+m_3+|M^*|=m:=e(G)$. 
	 We will now give an orientation  $D$ of $G$ and a labeling $\sigma$ of $A(D)$ through five  parts below. 
	 	 \begin{enumerate}[(1)]
	 	\item  Orient and label $H_1$: direct each edge from $T$ to $S$.
	 	For each $i\in [1,n_1]$, let $A_i$ be the set of all edges incident to $y_i$ in $H_1$. 
	 	Clearly, $|A_1|+|A_2|+\ldots+|A_{n_1}|=m_1$. 
	 	By~\eqref{ge2},  $|A_i|\ge 2$. 
	 	By Lemma~\ref{partitions} applied to $m_1$ with 
	 	$t=n_1$ and $r_i=|A_i|$ for each $i\in[1,t]$, the set $\{1,2,\ldots, m_1\}$
	 	can be partitioned into $R_1, R_2, \ldots, R_{n_1}$ such that for each $i\in [1,n_1]$, 
	 	$|R_i|=|A_i|$ and $\sum_{r\in R_i} r \equiv 0 \pmod{m_1+1}$ if $m_1$ is even, and 
	 	$\sum_{r\in R_i} r \equiv 0 \pmod{m_1}$ if $m_1$ is odd.  Label edges in $A_i$
	 	by integers in $R_i$ in an arbitrary way as long as distinct edges receive distinct labels. 
	 	\item  Orient and label $G_2$: Note that for each $y\in T$, $d_{G_2}(y)$ is even and $N_{G_2}(y)\cap S\ne \emptyset$
	 	by the construction of $G_2$. Thus, 
	 	we apply Lemma~\ref{consecutive}  to get the orientation and labeling of $G_2$ with $p=m_1$ and $m=m_2$ 
	 	with the furthermore requirement for vertices in $T$. Let $D_2$ be the orientation of $G_2$
	 	and $\sigma_2$ be the labeling. 
	 	We have  
	 	\begin{eqnarray}\label{sigma2}
	 		%-\lfloor \frac{d(v)-1}{2}\rfloor -(p+m)\le &  s(D,\sigma_1)(v)&\le -\lfloor \frac{d(v)-1}{2}\rfloor +(p+m), \quad \text{and }\\
	 		-(m_1+m_2)+\lfloor \frac{d_{G_2}(x)-1}{2}\rfloor\le &  s_{[D_2,\sigma_2]}(x)&\le \lfloor \frac{d_{G_2}(x)-1}{2}\rfloor +(m_1+m_2) \quad \text{for $x\in S$}, \nonumber \\
	 	&	s_{[D_2,\sigma_2]}(y) &= 	 \frac{d_{G_2}(y)}{2} \quad \text{for $y\in T$}. 
	 	\end{eqnarray}
 	
	 	\item  Orient and label $H_2$: applying Lemma~\ref{Teven} to get the orientation and labeling of $H_2$ with $p=m_1+m_2$ and $m=m_3$.
	 	Let $D_3$ be the orientation of $H_2$
	 	and $\sigma_3$ be the labeling. 
	 	We have   
	 	\begin{eqnarray}\label{sigma3}
	 		\lfloor \frac{d_{H_2}(x)-1}{2}\rfloor -\delta_m\le &  s_{[D_3,\sigma_3]}(x)&\le \lfloor \frac{d_{H_2}(x)-1}{2}\rfloor +\delta_m \quad \text{for each $x\in S$}, \nonumber\\
	 			& s_{[D_3,\sigma_3]}(y)&=
	 		-d_{H_2}(y) \quad \quad\quad \quad\,\,\,\,
	 		\text{for each $y\in T$,}
	 	\end{eqnarray}
	 	where $\delta_m =m_1+m_2+m_3$ if $m_3\equiv 0 \pmod{4}$, and $\delta_m =m_1+m_2+m_3+1$ if $m_3\equiv 2 \pmod{4}$. 
	 \item  Orient and label $M^*\setminus M=\{e_{n_2+1}, \ldots, e_{n_1}\}$: direct each edge from $T$ to $S$. 
	 If $m_3\equiv 0 \pmod{4}$, 
	  for each $i\in [n_2+1, n_1]$, assign $m_1+m_2+m_3+(i-n_2)$ to $e_i$. 
	  If $m_3\equiv 2 \pmod{4}$, assign $m_1+m_2+m_3$ to $e_{n_2+1}$, and 
	  for each $i\in [n_2+2, n_1]$, assign $m_1+m_2+m_3+(i-n_2)$ to $e_i$. 
	 	\medskip
	 
	 Let $D^*$ be the union of the digraphs obtained through the four  parts above, and $\sigma^*$ be the  labeling on $A(D^*)$ consists of the four labelings above. 
	 Assume that the sums at vertices from $S=\{x_1, \ldots, x_{n_2}\}$ satisfy 
	 $$
	 s_{[D^*,\sigma^*]}(x_1)\le	s_{[D^*,\sigma^*]}(x_2)\le \ldots \le s_{[D^*,\sigma^*]}(x_{n_2}). 
	 $$
	 
	 \medskip

	 	\item  Orient and label $M$: direct each edge from $T$ to $S$. If $n_1\ge n_2+1$ or  $m_3\equiv 0 \pmod{4}$, 
	 	    for each $i\in [1, n_2]$, assign $m_1+m_2+m_3+n_1-n_2+i$ to $x_iy_i$.  
	 	    If $n_1=n_2$ and  $m_3\equiv 2 \pmod{4}$, assign $m_1+m_2+m_3$ to $x_1y_1$, and for 
	 	    each $i\in [2, n_2]$, assign $m_1+m_2+m_3+i$ to $x_iy_i$.

	 \end{enumerate}
 
 Let $D$ and $\sigma$ be the resulting orientation  and labeling, respectively. 
 It is clear that $\sigma$ is injective. We show that $\sigma$ 
 is an antimagic labeling of $D$. 
 
 	By Step 5, we have 
 $$
 s_{[D,\sigma]}(x_1)<s_{[D,\sigma]}(x_2)<\ldots <s_{[D,\sigma]}(x_{n_2}). 
 $$
 Furthermore, for each $i\in [1,n_2]$, by \eqref{sigma2} and \eqref{sigma3},   $s_{[D^*,\sigma^*]}(x_i)\ge \lceil\frac{d_{G_2}(x_i)-1}{2}\rceil -m_1-m_2+\lceil\frac{d_{H_2}(x_i)-1}{2}\rceil-m_1-m_2-m_3-1$, 
  we know $s_{[D,\sigma]}(x_i)\ge s_{[D^*,\sigma^*]}(x_i)+m_1+m_2+m_3\ge -m_1-m_2-1$. 
 For each vertex $y_i\in T$, $i\in [1,n_1]$, for  all the edges incident to $y_i$
 that are contained in $G_2\cup H_2$, the partial sum  at $y_i$ of the labels assigned to these 
 edges is zero by~\eqref{deg3}, \eqref{sigma2} and \eqref{sigma3}. All other edges incident to $y_i$ that are contained in 
 $H_1\cup M^*$
 are oriented towards $S$. Thus, $s_{[D,\sigma]}(y_i)<0$. 
 Furthermore, by Steps 1, 4 and 5, 
 $s_{[D,\sigma]}(y_i)\le-m_1-m_2-m_3-3$.  
 
 Thus, for each $x\in S$ and each $y\in T$, 
 $s_{[D,\sigma]}(x)>s_{[D,\sigma]}(y)$. 
 Therefore, it is left to only show that all vertices from $T$
 have distinct sums under $\sigma$ in $D$. 
 
 By Steps 1, 4, 5, and \eqref{sigma2} and \eqref{sigma3}, for each $i\in [1,n_1]$ and  some integer $a_i\ge 1$, we have  
 $$
 |s_{[D,\sigma]}(y_i)|= 
 \begin{cases}
 \frac{d_{G_2}(y_i)}{2}-d_{H_2}(y_i)+a_im_1 +m_1+m_2+m_3+\sigma_i,  & \text{if $m_1$ is odd}, \\ 
 \frac{d_{G_2}(y_i)}{2}-d_{H_2}(y_i)+a_i(m_1+1) +m_1+m_2+m_3+\sigma_i,  & \text{if $m_1$ is even}, 
 \end{cases}
 $$
 where $\sigma_i\in [1,n_1]$ are all distinct, 
 and $ \frac{d_{G_2}(y_i)}{2}-d_{H_2}(y_i)=0$. 
  Since $ m_1\ge 2n_1>n_1$ by~\eqref{ge2}, 
 for any two distinct $i,j\in [1,n_1]$, 
 $$
 s_{[D,\sigma]}(y_i)-s_{[D,\sigma]}(y_j)\not\equiv 
 \begin{cases}
 0 \pmod{m_1},  & \text{if $m_1$ is odd}, \\ 
 0 \pmod{m_1+1},  & \text{if $m_1$ is even}. 
 \end{cases}
 $$
 Consequently, $s_{[D,\sigma]}(y_i) \ne s_{[D,\sigma]}(y_j)$.  

The proof  is now complete.

%	 Applying Lemma~\ref{partitions} to $H_1$. 
%	 	 Applying Lemma 6 to $G_2$ with $p=m_1$ and $m=m_2$,
%	 applying Lemma 7 to  $H_2$ with $p=m_1+m_2$ and $m=m_3$. 

%%%%%%%%%%%%%%%%%%%%%%%%%%%%%%%%%%%%%%%%%%%%%%%%%%%%%%%
 \bibliographystyle{plain}
\bibliography{SSL-BIB}

\begin{thebibliography}{10}

\bibitem{MR2096791}
N.~Alon, G.~Kaplan, A.~Lev, Y.~Roditty, and R.~Yuster.
\newblock Dense graphs are antimagic.
\newblock {\em J. Graph Theory}, 47(4):297--309, 2004.

\bibitem{MR3414180}
Krist\'of B\'erczi, Attila Bern\'ath, and M\'at\'e Vizer.
\newblock Regular graphs are antimagic.
\newblock {\em Electron. J. Combin.}, 22(3):Paper 3.34, 6, 2015.

\bibitem{MR3515572}
Feihuang Chang, Yu-Chang Liang, Zhishi Pan, and Xuding Zhu.
\newblock Antimagic labeling of regular graphs.
\newblock {\em J. Graph Theory}, 82(4):339--349, 2016.

\bibitem{MR2478227}
Daniel~W. Cranston.
\newblock Regular bipartite graphs are antimagic.
\newblock {\em J. Graph Theory}, 60(3):173--182, 2009.

\bibitem{MR3372337}
Daniel~W. Cranston, Yu-Chang Liang, and Xuding Zhu.
\newblock Regular graphs of odd degree are antimagic.
\newblock {\em J. Graph Theory}, 80(1):28--33, 2015.

\bibitem{MR3527991}
Tom Eccles.
\newblock Graphs of large linear size are antimagic.
\newblock {\em J. Graph Theory}, 81(3):236--261, 2016.

\bibitem{MR1282717}
Nora Hartsfield and Gerhard Ringel.
\newblock {\em Pearls in graph theory}.
\newblock Academic Press, Inc., Boston, MA, 1994.
\newblock A comprehensive introduction, Revised reprint of the 1990 original.

\bibitem{MR2174213}
Dan Hefetz.
\newblock Anti-magic graphs via the combinatorial nullstellensatz.
\newblock {\em J. Graph Theory}, 50(4):263--272, 2005.

\bibitem{MR2674494}
Dan Hefetz, Torsten M\"utze, and Justus Schwartz.
\newblock On antimagic directed graphs.
\newblock {\em J. Graph Theory}, 64(3):219--232, 2010.

\bibitem{MR2682515}
Dan Hefetz, Annina Saluz, and T.~T.~Tran Huong.
\newblock An application of the combinatorial {N}ullstellensatz to a graph
  labelling problem.
\newblock {\em J. Graph Theory}, 65(1):70--82, 2010.

\bibitem{MR2510327}
Gil Kaplan, Arieh Lev, and Yehuda Roditty.
\newblock On zero-sum partitions and anti-magic trees.
\newblock {\em Discrete Math.}, 309(8):2010--2014, 2009.

\bibitem{1707.03507}
Tong Li, Zi-Xia Song, Guanghui Wang, Donglei Yang, and Cun-Quan Zhang.
\newblock Antimagic orientations of even regular graphs, 2017.

\bibitem{SY2017}
Songling Shan and Xiaowei Yu.
\newblock Antimagic orientation of biregular bipartite graphs.
\newblock {\em Electron. J. Combin.}, 24(4):Paper 4.31, 20, 2017.

\bibitem{SH2019}
Chen Song and Rong-Xia Hao.
\newblock Antimagic orientations of disconnected even regular graphs.
\newblock {\em Discrete Math.}, 342(8):2350--2355, 2019.

\bibitem{SYZ}
Zi-Xia Song, Donglei Yang, and Fangfang Zhang.
\newblock Antimagic orientations of graphs with given independence number.
\newblock {\em \arxiv{1909.10928v2}}, 2019.

\bibitem{Y2019}
Donglei Yang.
\newblock A note on antimagic orientations of even regular graphs.
\newblock {\em Discrete Appl. Math.}, 267:224--228, 2019.

\bibitem{YCOP2019}
Donglei Yang, Joshua Carlson, Andrew Owens, K.~E. Perry, Inne Singgih, Zi-Xia
  Song, Fangfang Zhang, and Xiaohong Zhang.
\newblock Antimagic orientations of graphs with large maximum degree.
\newblock {\em \arxiv{908.06072v2}}, 2019.

\bibitem{MR3021347}
Zelealem~B. Yilma.
\newblock Antimagic properties of graphs with large maximum degree.
\newblock {\em J. Graph Theory}, 72(4):367--373, 2013.

\bibitem{YCZ2019}
Xiaowei Yu, Yulin Chang, and Shan Zhou.
\newblock Antimagic orientation of {H}alin graphs.
\newblock {\em Discrete Math.}, 342(11):3160--3165, 2019.

\end{thebibliography}

\end{document}